\documentclass[12pt]{amsart}
	\pagestyle{plain}
	\usepackage{amsmath}
	\usepackage{amsfonts}
	\usepackage{amssymb}
	\usepackage[]{amsmath, amsthm, amsfonts, amssymb, epsfig}
	\newtheorem{thm}{Theorem}
	\textheight222mm
	\textwidth144mm
	\topmargin-12mm
	\oddsidemargin4.5mm
	\evensidemargin4.5mm
	\parskip4pt plus2pt minus2pt

	\begin{document}
	
		\author[A. R. Legg]{A. R. Legg}
		\address{Department of Mathematical Sciences, Purdue University Fort Wayne, Ft. Wayne, IN 46805}
		\email{leggar01@pfw.edu}
%
	
		\author{P. D. Dragnev$^\dagger$}
		\address{Department of Mathematical Sciences, Purdue University Fort Wayne, Ft. Wayne, IN 46805}
		\email{dragnevp@pfw.edu}
		\thanks{\noindent $^\dagger$ The research of this author was supported in part by a Simons Foundation CGM no. 282207.}

		\begin{abstract}
			With the sphere $\mathbb{S}^2 \subset \mathbb{R}^3$ as a conductor holding a unit charge with logarithmic interactions, we consider the problem of determining the support of the equilibrium measure in the presence of an external field consisting of finitely many point charges on the surface of the sphere.  We determine that for any such configuration, the complement of the equilibrium support is the stereographic preimage from the plane of a union of classical quadrature domains,  whose orders sum to the number of point charges.
		\end{abstract}
		
		\title{Logarithmic Equilibrium on the Sphere in the Presence of Multiple Point Charges}
		
		\maketitle
				
\noindent {\bf Keywords:} Quadrature domain, equilibrium measure, Schwarz function, balayage

\vskip 2mm

\noindent {\bf Mathematics Subject Classification}: {30C40, 30E20, 31A05,  74G05, 74G65}
\vskip 2mm

	\section{Introduction to the problem}\label{Sect1}
		
Consider the unit sphere $\mathbb{S}^2 \subset \mathbb{R}^3$ as a conductor, carrying a unit positive electric charge which is free to distribute into the Borel measure which will uniquely minimize logarithmic energy. With no other external field present, we of course intuit that the equilibrium state is uniform over the whole sphere.  But what happens in the presence of an added field?  

 The case of an external field consisting of a single point charge has been considered in \cite{Dr}, with the conclusion that the equilibrium support is the complement of a perfect spherical cap centered at the point charge. That is to say, a single point charge tends to repel the charge on the sphere, so that a perfect cap is swept clean of charge.  The radius of the cap can be explicitly calculated based on the intensity of the point charge, and the result can be extended to Riesz energies of various exponent, and even to higher dimensions (see \cite{DS} and \cite{BDS}).
 
 In \cite{BDSW}, the case of multiple point charges is undertaken, and the authors demonstrate that, similar to the single-point-charge case, the equilibrium support is the complement of the union of spherical caps centered at the various point charges, with the caveat that this holds only in case the interiors of these ``caps of influence" do not overlap.  Numerically generated graphics are shown there that illustrate the case when two point charges' caps of influence do overlap, and what arises is an apparently smooth lobe-shaped equilibrium support excluding both of the individual caps of influence. 
 
 The open question raised there, then, is how exactly to characterize the equilibrium support when multiple charges are present, and the charges are close enough or strong enough that their individual caps of influence overlap.
 
We do so by means of classical planar quadrature domains.  By projecting stereographically to the plane and then pursuing a course of complex analysis and potential theory, we show that the region of charge exclusion is the stereographic preimage from the plane of a quadrature domain, the order of whose quadrature identity is equal to the number of point charges constituting the external field. Indeed, this will hold for any finite number of point charges in any configuration on the sphere.

 This sheds more light on the \cite{BDSW} result, since in the case of one point charge the only possible quadrature domain of order one is known to be the disc, and the stereographic preimage of a disc onto the sphere is a spherical cap. 
 
 The particular case of two charges of equal intensity whose regions of influence overlap was recently studied by Criado del Rey and Kuijlaars \cite{CdRK}, with methods quite different than our own.

 To begin, we will review some notions from potential theory, complex analysis, and quadrature domain theory which will be encountered in our explication. For a more thorough introduction to logarithmic potentials in the plane, we point the reader to \cite{SaffTotik}.
 
 After the review, we approach the problem from several perspectives. First, a general connection to balayage is made, which will reinforce the theme from \cite{BDSW} that as long as the components of the complement of the equilibrium support are disjoint, they are determined separately from one another (so groups of charges really do have proper ``regions of influence".)  
 
 Next we show that, assuming a priori smoothness of the boundary of the equilibrium support, Frostman's condition on the equilibrium potential can be used with Mergelyan's Theorem to identify the complement of the equilibrium support as a quadrature domain as described above.
 
 In Section \ref{Sect5}, we offer an alternate approach which assumes no a priori boundary smoothness whatever. This approach mirrors the development of Aharanov and Shapiro in \cite{AS}, modified to our present problem. 
 
 Finally, we present some examples that illustrate our results.

\section{Potential Theory Background}
\label{Sect2}
\subsection{Equilibrium Measures and External Fields}

The mathematical presentation of our problem is as follows: Let $a_i,$ $i=1,2, \cdots, n$ be $n$ distinct points on the unit sphere $\mathbb{S}^2 \subset \mathbb{R}^3$, and for points $x \in \mathbb{S}^2$, consider the collection of point charges

\[ \mu_n:=\sum_{i=1}^n q_i \delta_{a_i},\]
where $q_i$ are positive real numbers, and the $\delta$'s are Dirac point distributions. Assuming logarithmic interaction, we will consider the external field produced by the charges, expressed at points $x$ as:
  \begin{equation}\label{Q} 
  Q(x) = \sum_{i=1}^n q_i\log \frac{1}{|x-a_i|} .
  \end{equation}
 
  Let $\mathcal{M}$ denote the set of all unit Borel measures on $\mathbb{S}^2$.
Then, given a $\mu \in \mathcal{M}$, the {\em logarithmic potential} of $\mu$ at the point $x \in \mathbb{S}^2$ is
  \[U^{\mu}(x)=\int_{\mathbb{S}^2} \ln \frac{1}{|y-x|} d\mu(y);\] 
    and the logarithmic energy of $\mu$ is 
    \[V^\mu= \int_{\mathbb{S}^2} U^{\mu}(x) d\mu(x).\]
    Given a compact subset $K\subset \mathbb{S}^2$ and measures supported on $K$ with finite energy, one can seek for a minimizer of $V^\mu$ among the class $\mathcal{M}(K)$ of probability measures supported on $K$. Such a minimizer $\mu_K$, referred to as the {\em equilibrium measure} of $K$, exists and is unique. The logarithmic capacity of $K$ is defined as $\text{cap}(K) = \exp(-V^{\mu_K}).$

    In the presence of an {\em external field} $Q$, the total weighted energy of the system is
     
      \[V_Q^\mu=\int_{\mathbb{S}^2} \int_{\mathbb{S}^2} \ln \frac{1}{|y-x|} d\mu(y) d\mu(x) + 2\int_{\mathbb{S}^2} Q(y) d\mu(y).\]
      The {\em equilibrium measure w.r.t. to the external field} $\mu_Q$ is then defined as the unique unit Borel measure with minimal possible weighted energy. 
      
     In Section 3 we shall consider minimal energy problems over the class $\mathcal{M}^t$ of measures of total mass $t>0$. The extremal minimizers in this case are denoted with $\mu_K^t$ and $\mu_Q^t$. We note the modification of the weighted energy
          \begin{equation}
          \label{t_eq}
          V_{Q}^{\mu,t}=\int_{\mathbb{S}^2} \int_{\mathbb{S}^2} \ln \frac{1}{|y-x|} d\mu(y) d\mu(x) + 2t\int_{\mathbb{S}^2} Q(y) d\mu(y).
          \end{equation}

Setting aside the sphere for a moment, in the planar setting the equilibrium measure under the influence of an admissible external field $Q$ on a conductor $\Omega$ of positive logarithmic capacity is described by the so-called Frostman Theorem, which we include here for reference. For an explanation and proofs, see \cite{SaffTotik}.

\begin{thm} \label{thm1} Let $\Omega \subset \mathbb{C}$ have positive logarithmic capacity, and let $Q$ be an admissible external field. Then consider the problem of minimizing the weighted energy $V_Q$ among all positive unit Borel measures with compact support in $\overline{\Omega}$. The following hold:
	
	(i)  The minimal energy $V_Q$ is finite and obtained by a unique minimizing measure $\mu_Q$ (called the equilibrium measure).
	
	(ii)  For some constant $F_Q$, $U^{\mu_Q}(z)+Q(z)\leq F_Q$ on $supp(\mu_Q)$, and $U^{\mu_Q}(z)+Q(z) \geq F_Q$ quasi-everywhere on $\Omega$  (i.e. with the exception of a set of zero logarithmic capacity).
	
	(iii) The measure $\mu_Q$ is uniquely characterized by (ii).
\end{thm}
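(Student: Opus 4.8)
The plan is to follow the classical variational route (as developed in \cite{SaffTotik}), splitting the argument according to the three assertions.

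\emph{Existence, finiteness, and uniqueness (part (i)).} First I would check that the infimum of the weighted energy over probability measures with compact support in $\overline{\Omega}$ is finite: admissibility of $Q$ guarantees some competitor of finite weighted energy (so the infimum is $<\infty$), while the lower bound $\iint\log\frac1{|x-y|}\,d\mu\,d\mu\ge -\log(\operatorname{diam})$ on compacta together with the growth hypothesis $Q(z)-\log|z|\to\infty$ built into admissibility keeps the infimum $>-\infty$ and, crucially, forces every minimizing sequence to be tight (mass cannot escape to infinity). By Helly's selection theorem I extract a weak-$*$ limit $\mu_Q$; since $\log\frac1{|x-y|}$ is lower semicontinuous and $Q$ is lower semicontinuous, the weighted energy functional is weak-$*$ lower semicontinuous, so $\mu_Q$ attains the infimum and has finite energy. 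Uniqueness rests on the \emph{energy principle}: for a compactly supported signed measure $\nu$ of finite energy with $\nu(\mathbb{C})=0$ one has $\iint\log\frac1{|x-y|}\,d\nu\,d\nu\ge 0$, with equality only if $\nu=0$. If $\mu_1,\mu_2$ both minimize, the quadratic identity
\[
V_Q^{(\mu_1+\mu_2)/2}=\tfrac12 V_Q^{\mu_1}+\tfrac12 V_Q^{\mu_2}-\tfrac14\iint\log\tfrac1{|x-y|}\,d(\mu_1-\mu_2)\,d(\mu_1-\mu_2)
\]
together with $V_Q^{(\mu_1+\mu_2)/2}\ge V_Q^{\mu_1}=V_Q^{\mu_2}$ forces the last integral to be $\le 0$, hence $=0$, hence $\mu_1=\mu_2$.

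\emph{The variational inequalities (part (ii)).} I would set $F_Q:=\int(U^{\mu_Q}+Q)\,d\mu_Q$. For any competitor $\nu$ of finite energy, convexity of $t\mapsto V_Q^{\mu_t}$ along the segment $\mu_t:=(1-t)\mu_Q+t\nu$ gives $\frac{d}{dt}\big|_{t=0^+}V_Q^{\mu_t}\ge 0$, which unwinds to $\int(U^{\mu_Q}+Q)\,d\nu\ge F_Q$ for all such $\nu$. Specializing $\nu$ to normalized restrictions of capacity-equilibrium measures of small balls, and invoking that exceptional sets here are controlled by logarithmic capacity, yields $U^{\mu_Q}+Q\ge F_Q$ quasi-everywhere on $\Omega$; since a finite-energy measure does not charge polar sets, this holds $\mu_Q$-a.e., and combined with $\int(U^{\mu_Q}+Q)\,d\mu_Q=F_Q$ it gives $U^{\mu_Q}+Q=F_Q$ $\mu_Q$-a.e. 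Finally, a lower-semicontinuity argument on $\operatorname{supp}(\mu_Q)$ (superharmonicity of $U^{\mu_Q}$, together with the regularity of $Q$ on the support) upgrades this to the pointwise bound $U^{\mu_Q}+Q\le F_Q$ on all of $\operatorname{supp}(\mu_Q)$.

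\emph{Characterization (part (iii)).} Suppose $\nu$ is a finite-energy probability measure supported in $\overline{\Omega}$ with $U^\nu+Q\le L$ on $\operatorname{supp}(\nu)$ and $U^\nu+Q\ge L$ q.e. on $\Omega$. Integrating each of these two inequalities, and each of the two inequalities from (ii), against $\mu_Q$ and against $\nu$ (neither measure charges polar sets), and using the symmetry of mutual energy $\int U^\nu\,d\mu_Q=\int U^{\mu_Q}\,d\nu$, one assembles the chain of estimates that bounds $\iint\log\frac1{|x-y|}\,d(\mu_Q-\nu)\,d(\mu_Q-\nu)$ above by $0$. The energy principle then forces $\mu_Q-\nu=0$, i.e.\ $\nu=\mu_Q$ (and along the way $L=F_Q$), so (ii) characterizes $\mu_Q$.

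I expect the genuine obstacles to be the two places where capacity enters essentially rather than cosmetically: establishing tightness of minimizing sequences in the unbounded case directly from the admissibility growth condition, and handling the ``quasi-everywhere'' exceptional sets — in particular, passing from the integrated inequality $\int(U^{\mu_Q}+Q)\,d\nu\ge F_Q$ to the pointwise q.e.\ statement, and knowing that finite-energy measures put no mass on polar sets. The energy principle itself, though classical, is the analytic engine behind both the uniqueness in (i) and the rigidity in (iii), so I would cite it carefully rather than reprove it.
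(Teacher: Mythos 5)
This theorem is not proved in the paper at all: it is the classical Frostman theorem for weighted logarithmic potentials, stated purely as background with the proof deferred to \cite{SaffTotik} (Theorems I.1.3 and I.3.3 there). Your sketch correctly outlines exactly that standard argument — lower semicontinuity plus Helly selection and the energy principle for (i), the variational inequality together with lower semicontinuity of $U^{\mu_Q}+Q$ for (ii), and the integrated chain of inequalities plus the energy principle for (iii) — the only loose spot being that the quasi-everywhere lower bound is obtained by testing against finite-energy measures (e.g.\ equilibrium measures) of compact subsets of the putative exceptional set of positive capacity, rather than of ``small balls''.
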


We note that for the external fields considered in this article (see \eqref{Q}) the second inequality in (ii) holds everywhere and therefore the weighted potential of $\mu_Q$ is constant on the support of $\mu_Q$, and can only be greater or equal outside of the support. The same characterization holds for $\mu_{Q}^t$ as well.

As for the sphere, in \cite{BDSW} it is shown that this problem can be considered in the plane via stereographic projection, using the fact that under this projection, taking the north pole as the source of the projection, surface area measure on the sphere transforms to become
\[ \frac{dA}{\pi (1+|z|^2)^2} \]
 on the complex plane (here and throughout, $dA$ shall refer to planar Lebesgue measure). In our case of a $Q$ defined in \eqref{Q}, the resulting projected planar formulation of Frostman's condition $(ii)$ is:

\begin{equation}
\begin{split}
\label{Frostman}
 \frac{q+1}{\pi}\int_{\Sigma^*} \frac{1}{(1+|w|^2)^2}\ln\frac{1}{|w-z|}dA_w + \sum_{i=1}^n q_i \ln \frac{1}{|z-z_i|} +(q+1)\ln \sqrt{1+|z|^2}\\=const,
 \end{split}
\end{equation}
where the stereographic projections of the point charges are at points $z_i\in \mathbb{C}$, with respective charge intensities of $q_i$, and the total sum of all the charges is $q=\sum_{i=1}^n q_i.$ The equilibrium support on the sphere is $\Sigma$, and its stereographic projection onto the plane is called $\Sigma^*$. The equality is valid for all $z \in \Sigma^*$. 

Our purpose throughout this article, then, is to identify for which $\Sigma^*$ this equality could possibly hold.

An important note here is that, as can readily be checked, if the north pole for the projection is taken to be at one of the point charges, the resulting external field in the plane is admissible. This allows for the utilization of Theorem \ref{thm1} to derive the analogous theorem of existence, uniqueness, and characterization of the equilibrium measure $\mu_Q$ on $\mathbb{S}^2$ with external field $Q(x)$ defined in \eqref{Q}. If the projection is taken from any other point, the external field is instead `weakly admissible' in the sense described by Bloom, Levenberg and Wielonsky \cite{BLW}. Thankfully, the Frostman condition remains intact. This means that rotations of the sphere result in no loss of generality in using (\ref{Frostman}) to describe the equilibrium support.

\subsection{Quadrature Domains}

Our characterization of the equilibrium support will involve planar quadrature domains, which generalize the harmonic mean value property of discs. A domain $\Omega \subset \mathbb{C}$ is a {\em quadrature domain} for integrable analytic functions if there exist finitely many points $z_i \in \Omega$ and constants $c_{ik}$ such that for any integrable analytic $f$ on $\Omega$, we have the {\em quadrature identity}
\[\int_\Omega f(w)dA_w=\sum_{i,k} c_{ik}f^{(k)}(z_i).\]

In other words, integration for such $f$ is identical to a finite linear combination of point evaluations of the functions and their derivatives, and the same coefficients and points apply for each $f$. The `order' of a quadrature domain is the number of terms in its quadrature identity, and the points of evaluation are called `nodes'.

The theory of quadrature domains has gained attention from several areas in the past few decades, in no small part because they automatically enjoy a long list of desirable properties, and exist in abundance. Their first manifestations occur in \cite{AS}, and from there they are applied to such fields as fluid dynamics, operator theory, real potential theory, and complex analysis. To name just a few references which give a good background, we suggest \cite{Bell4,GS,Shapiro2, Proc}, and their respective bibliographies. Connections between quadrature domains and the sphere appear from the realm of fluid dynamics in \cite{Crowdy, CC} and in the treatment of potential theory on manifolds in \cite{Skinner, GR}. 

Quadrature domains can be generalized by changing the test class of functions on which the quadrature identity holds, or by replacing the sum of point evaluations by compactly supported measures in the domain. We will not do so here, and will employ only the `classical' quadrature domains which we defined above.  

Among the many approaches to thinking about quadrature domains, our analysis will specifically apply the concept of a `Schwarz function'.  Given a bounded  domain $\Omega$ in the complex plane, the Schwarz function $S(w)$ of the boundary of $\Omega$, if it exists, is defined as the analytic continuation of the function
 \[\bar{w}|_{bd \Omega}\]
  into some interior neighborhood. If $\Omega$ is real analytic, the existence of the Schwarz function is guaranteed at least in a small neighborhood of $bd\Omega$ by the Cauchy-Kovalevskaya theorem.  If the Schwarz function extends inside a bounded domain $\Omega$ in such a way as to be meromorphic throughout, with finitely many poles, then $\Omega$ turns out to be a quadrature domain. This can be conceptualized as Stokes's theorem paired with the Residue theorem, since for an analytic $f$, \[f dw \wedge  d\bar{w}=d(\bar{w} f dw), \]
   and on the boundary $\bar{w}=S(w)$, which is meromorphic. We can see from this also that the number of point evaluations in the quadrature identity is equal to the number of poles of $S(w)$ counting multiplicity. We review this well-known information here:

\begin{thm}
	Let $\Omega$ be a bounded domain in the plane. If the boundary function \[\bar{w}|_{bd \Omega}\]
	extends to be a meromorphic function $S(w)$ (called the Schwarz function) on $\Omega$ with finitely many poles, then $\Omega$ is a quadrature domain whose order is the number of poles of $S(w)$ counted with multiplicity.
\end{thm}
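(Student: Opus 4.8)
The plan is to make precise the informal sketch already given in the paragraph preceding the statement, namely the combination of Stokes's (complex Green's) theorem with the residue theorem. Let $f$ be analytic and integrable on $\Omega$. Starting from the observation $f\,dw\wedge d\bar w = d(\bar w f\,dw)$, I would integrate over $\Omega$ and apply Stokes's theorem to convert the area integral into a contour integral over $bd\,\Omega$. Since the identity $dw\wedge d\bar w = -2i\,dA_w$ holds, the left-hand side is, up to the factor $-2i$, precisely $\int_\Omega f\,dA_w$. On the boundary, by hypothesis $\bar w = S(w)$, where $S$ extends meromorphically to all of $\Omega$ with finitely many poles $z_1,\dots,z_m$ (counted with multiplicity), so the contour integral becomes $\oint_{bd\,\Omega} S(w)f(w)\,dw$.

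Next I would deform the contour $bd\,\Omega$ inward and apply the residue theorem to the meromorphic integrand $S(w)f(w)$: the integral equals $2\pi i$ times the sum of the residues of $S(w)f(w)$ at the poles $z_i$. Because $f$ is analytic, each residue of $S(w)f(w)$ at a pole $z_i$ of order, say, $k_i+1$ is a finite linear combination of $f(z_i), f'(z_i),\dots,f^{(k_i)}(z_i)$, with coefficients $c_{ik}$ depending only on the Laurent coefficients of $S$ at $z_i$ — and in particular independent of $f$. Collecting the constants from $-2i$ and $2\pi i$ into the $c_{ik}$, one obtains exactly the quadrature identity $\int_\Omega f\,dA_w = \sum_{i,k} c_{ik} f^{(k)}(z_i)$, with the total number of terms equal to $\sum_i (k_i+1)$, i.e. the number of poles of $S$ counted with multiplicity. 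This gives both the quadrature identity and the claim about the order.

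The main obstacle is rigor at the boundary: Stokes's theorem in the stated form requires enough regularity of $bd\,\Omega$ and integrability/boundary behavior of $f$ to justify the contour representation, and one must handle the passage from the boundary to an interior contour that separates the poles from $bd\,\Omega$. I would address this by first observing that, since $S$ is meromorphic on $\Omega$ and continuous up to $bd\,\Omega$ (agreeing with $\bar w$ there), $bd\,\Omega$ is real-analytic away from possible singular points, and for an integrable analytic $f$ one can exhaust $\Omega$ by slightly smaller domains $\Omega_\epsilon$ with smooth boundary lying in the region where $S$ is analytic near $bd\,\Omega$, apply the classical Stokes/residue argument on $\Omega_\epsilon$, and pass to the limit using dominated convergence (the poles of $S$ sit in a fixed compact subset of $\Omega$, so for $\epsilon$ small they are all enclosed). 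Since this is standard and the statement is flagged as "well-known information," I would keep this part brief and cite the quadrature-domain literature already referenced (e.g. \cite{AS, Shapiro2, GS}) for the technical details, emphasizing instead the clean residue bookkeeping that yields the order count.
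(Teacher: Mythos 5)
Your argument is exactly the paper's own justification made precise: the paper proves this theorem only by the sketch preceding it (Stokes's theorem to pass to $\int_{bd\,\Omega}\bar w f\,dw$, replace $\bar w$ by $S(w)$ on the boundary, then the residue theorem, with the pole count of $S$ giving the order), and your residue bookkeeping and deferral of the boundary-regularity technicalities to \cite{AS, Shapiro2, GS} match that treatment. No substantive difference to report.
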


\section{Equilibrium Measures via Balayage and Signed Equilibria}
\label{Sect3}

In this section we shall introduce the notion of a {\em {\rm (}logarithmic{\rm )}  balayage} of a measure and utilize it to characterize the equilibrium measure $\mu_Q$ (see \cite{La, SaffTotik}). Given a positive measure $\nu$ on the unit sphere $\mathbb{S}^2$, its balayage $\widehat{\nu}:={\rm Bal}(\nu, K)$ on a compact subset $K \subset \mathbb{S}^2$ is defined as the unique measure supported on $K$, that preserves, up to a constant, the logarithmic potential of $\nu$ on $K$, and diminishes it on the whole sphere, namely
\begin{equation}
U^{\widehat{\nu}}(x)=U^\nu (x)+c \quad {\rm on} \quad K,\quad \quad U^{\widehat{\nu}}(x)\leq U^\nu (x)+c \quad {\rm on} \quad \mathbb{S}^2
\end{equation}
 We note that balayage preserves the total mass, that is $\nu(\mathbb{S}^2)=\widehat{\nu}(\mathbb{S}^2)$. 
 
 There are various techniques for finding balayage of measures. For example, balayage may be found in steps. Say, $F\subset K \subset \mathbb{S}^2$, then ${\rm Bal}(\nu,F)={\rm Bal} ({\rm Bal}(\nu,K),F)$. To find the balayage of a point-mass measure $\delta_a$ at a point $a \in K^c:=\mathbb{S}^2\setminus K$, we invert (perform a stereo-graphical projection) the sphere about $a$ and determine the equilibrium measure $\mu_{K^*}$ of the image $K^*$ of $K$. The pre-image of $\mu_{K^*}$ under the stereo-graphical projection is the balayage $\widehat{\delta_a}$. The following superposition formula is also useful
 \[ {\rm Bal}(\nu,K)=\nu_{\vert K} + {\rm Bal}(\nu_{\vert K^c} ,K)=\nu_{\vert K} +\int_{K^c} \widehat{\delta_y}\, d\nu(y).\]

This allows us to make an important observation about where the logarithmic balayage on the sphere "lives". Should we fix the point $a$ at the north pole and use $\sqrt{2}$ as the inversion radius, the image of the sphere is $\mathbb{C}$. It is known that the equilibrium measure $\mu_{K^*}$ is supported on the outer boundary of $K^*$, which yields that $\widehat{\delta_a}$ is supported on the boundary of the component of $K^c=\mathbb{S}^2\setminus K$ that includes $\delta_a$, i.e. $a$ will not have "electrostatic influence" on the other components of $K^c$. The superposition formula extends this conclusion to ${\rm Bal}(\nu_{\vert K^c} ,K)$.
 
We are now in a position to extend the result from \cite{BDSW} that disjoint components of the complement of the equilibrium support ${\rm supp}(\mu_Q)$ are determined independently from each other. In this regard, we remind the reader that the characterization in Theorem \ref{thm1} holds for $\mu_{Q}^t$, namely
\begin{equation}\label{SphVarIneq} U^{\mu_Q^t}(x)+Q(x)\geq F_{Q,t} \ {\rm on }\ \mathbb{S}^2,\quad U^{\mu_Q^t}(x)+Q(x) = F_{Q,t} \ {\rm on} \ {\rm supp}(\mu_Q^t) .
\end{equation}

\begin{thm} \label{bal} Let $Q(x)$ be a discrete external field on the unit sphere $\mathbb{S}^2$  given in \eqref{Q} and let $\Sigma=\Sigma_Q$ be the support of the (unique) equilibrium measure $\mu_Q$.  Denote the connected components of $\Sigma^c$ with $C_1,\dots,C_m$ and define the associated with these components discrete measures $\mu_{n,j}$ and the related to these measures external fields $Q_{j}$ 
\[ \mu_{n,j}:= \sum_{a_i \in C_j} q_i \delta_{a_i},\quad Q_j (x):= U^{\mu_{n,j}}(x)= \sum_{a_i \in C_j} q_i\log \frac{1}{|x-a_i|},\]
i.e. $Q(x)=Q_1 (x)+\dots +Q_m(x)$. Then the components $\{ C_j\}$ are determined uniquely by the condition that
\begin{equation}\label{BalCond}
{\rm Bal}((1+q){\sigma}_{\vert_{\Sigma^c} } , \Sigma )-\sum_{j=1}^m {\rm Bal}\left( \mu_{n,j}, \partial C_j \right)\equiv 0,
\end{equation}
where $\sigma$ is the normalized unit Lebesgue surface measure on $\mathbb{S}^2$.
Consequently, $\mu_Q = (1+q) \sigma_{\vert \Sigma}$. 

Furthermore, for every $j=1,\dots,m$, the equilibrium measures with respect to $Q_j$ of norm $t_j :=1+q-\|\mu_{n,j}\|$ are given as
\begin{equation} \label{mu_Q^t}
\mu_{Q_j}^{t_j}= (1+q) \sigma_{\vert C_j^c},\end{equation}
determined uniquely by the condition
\begin{equation}\label{mu_Q^tcond}  {\rm Bal}((1+q){\sigma}_{\vert_{C_j} } , C_j^c )-{\rm Bal}\left( \mu_{n,j}, \partial C_j \right)\equiv 0.\end{equation}
\end{thm}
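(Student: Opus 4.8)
The plan is to extract the balayage identity \eqref{BalCond} directly from the Frostman conditions \eqref{SphVarIneq} for $\mu_Q$, to read off $\mu_Q=(1+q)\sigma_{\vert\Sigma}$ along the way, and then to invoke uniqueness of the equilibrium measure to conclude that \eqref{BalCond} pins down the components; the statements \eqref{mu_Q^t}--\eqref{mu_Q^tcond} will be the specialization of the argument to a single complementary component. First I would record two normalizing facts about $\mu_Q$. No $a_i$ lies in $\Sigma$: since $|x-y|\le 2$ on $\mathbb{S}^2$ we have $U^{\mu_Q}\ge-\log 2$ everywhere, so $U^{\mu_Q}+Q$ cannot equal the finite constant $F_Q$ at a point where $Q=+\infty$; consequently $\int Q\,d\mu_Q=\sum_i q_i U^{\mu_Q}(a_i)$ is finite, $\mu_Q$ has finite logarithmic energy, and in particular $\mu_Q$ has no atoms. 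Next, on the open set $\mathrm{int}\,\Sigma$ the equality $U^{\mu_Q}+U^{\mu_n}=F_Q$ holds; applying the Laplace--Beltrami operator and using the identity $\Delta_{\mathbb{S}^2}U^\mu=-2\pi\mu+2\pi\|\mu\|\sigma$ (equivalently, the constancy of $U^\sigma$) together with $\mu_n(\mathrm{int}\,\Sigma)=0$ gives $\mu_Q=(1+q)\sigma$ on $\mathrm{int}\,\Sigma$. Thus $\mu_Q=(1+q)\sigma_{\vert\Sigma}+\rho$, where $\rho\ge0$ is a non-atomic measure carried by $\partial\Sigma$; once $\rho=0$ is known (a delicate point, of the same nature as the exterior inequality discussed below), we obtain $\mu_Q=(1+q)\sigma_{\vert\Sigma}$, hence $(1+q)\sigma(\Sigma)=1$ and $(1+q)\sigma(\Sigma^c)=q$.

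To produce \eqref{BalCond}, I would substitute $U^{(1+q)\sigma_{\vert\Sigma}}=(1+q)U^\sigma-U^{(1+q)\sigma_{\vert\Sigma^c}}$ into the Frostman equality and cancel the constant $U^\sigma$, obtaining $U^{(1+q)\sigma_{\vert\Sigma^c}}-U^{\mu_n}=\mathrm{const}$ on $\Sigma$. By the superposition formula and the localization recalled just before the theorem (a point charge in $C_j$ has no balayage-influence on the other components), the measures ${\rm Bal}((1+q)\sigma_{\vert\Sigma^c},\Sigma)$ and ${\rm Bal}(\mu_n,\Sigma)=\sum_j{\rm Bal}(\mu_{n,j},\partial C_j)$ are both carried by $\bigcup_j\partial C_j\subset\Sigma$, have finite energy, and share the total mass $(1+q)\sigma(\Sigma^c)=q$. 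The defining properties of balayage together with the displayed identity make their difference a finite-energy, mass-zero measure carried by $\Sigma$ whose potential is constant on $\Sigma$; by the strict positive-definiteness of the logarithmic energy on measures of total mass zero (which transfers from the plane via stereographic projection), that difference vanishes. This is exactly \eqref{BalCond}, and comparing masses component by component yields $(1+q)\sigma(C_j)=\|\mu_{n,j}\|$, so that $t_j=1+q-\|\mu_{n,j}\|=(1+q)\sigma(C_j^c)$.

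For the uniqueness statement and the ``consequently'': conversely, I would take any compact $\widetilde\Sigma$ whose complement has components $\widetilde C_1,\dots,\widetilde C_{m'}$, each containing at least one $a_i$ and together all of them, with \eqref{BalCond} holding for $\widetilde\Sigma$, set $\widetilde\mu=(1+q)\sigma_{\vert\widetilde\Sigma}$, note that the mass balance in \eqref{BalCond} forces $\|\widetilde\mu\|=1$, and reverse the balayage computation above to see that $U^{\widetilde\mu}+Q$ is constant on $\widetilde\Sigma$. It then suffices to verify the Frostman inequality $U^{\widetilde\mu}+Q\ge F$ on each $\widetilde C_k$, after which Theorem~\ref{thm1}(iii) in the form \eqref{SphVarIneq} identifies $\widetilde\mu$ with $\mu_Q$ and $\widetilde\Sigma$ with $\Sigma$ --- this gives both the uniqueness of the $C_j$ and, with the first step, the formula $\mu_Q=(1+q)\sigma_{\vert\Sigma}$. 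Writing $U^{\widetilde\mu}+Q-F=\mathrm{const}+\sum_j U^{\gamma_j}$ with $\gamma_j:=\mu_{n,j}-(1+q)\sigma_{\vert\widetilde C_j}$, a mass-zero measure carried by $\widetilde C_j$ with vanishing balayage onto $\partial\widetilde C_j$, a short maximum-principle argument (the terms $U^{\gamma_j}$, $j\ne k$, are harmonic off $\widetilde C_j$ and constant on $\widetilde\Sigma$, hence constant on $\widetilde C_k$) reduces the inequality on $\widetilde C_k$ to the single assertion that the Green potential $\int g_{\widetilde C_k}(\cdot,y)\,d\gamma_k(y)$ is nonnegative on $\widetilde C_k$. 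Finally, \eqref{mu_Q^t}--\eqref{mu_Q^tcond} are obtained by running the entire argument for the external field $Q_j=U^{\mu_{n,j}}$, the class $\mathcal{M}^{t_j}$ with $t_j=(1+q)\sigma(C_j^c)$, and the single complementary component $C_j$.

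The step I expect to be the main obstacle is exactly this exterior Frostman inequality (and, of the same character, the vanishing of the single layer $\rho$ on $\partial\Sigma$). On the compact sphere, potentials of measures are \emph{not} superharmonic off their supports --- precisely because $\Delta_{\mathbb{S}^2}U^\mu=-2\pi\mu+2\pi\|\mu\|\sigma$ --- so the usual one-line minimum-principle argument is unavailable, and one must instead invoke the domination principle for balayage (equivalently, transfer the inequality to the plane via stereographic projection from a point of $\widetilde C_k$ and apply the classical logarithmic domination principle to the equal-mass pair $(1+q)\sigma_{\vert\widetilde C_k}$ and $\mu_{n,k}$). By contrast, the algebraic heart of the proof --- the identification of the density of $\mu_Q$ in the first paragraph and the extraction of \eqref{BalCond} in the second --- is essentially routine once the absence of a singular part of $\mu_Q$ on $\partial\Sigma$ is granted.
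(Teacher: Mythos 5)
Your opening and closing steps do overlap with the paper's argument: the identification of the density via an energy/positive-definiteness argument for mass-zero signed measures of finite energy is exactly the mechanism the paper uses (it applies \cite[Theorem 4.1]{Si} to $\mu_Q-\eta$ with $\eta={\rm Bal}((1+q)\sigma,\Sigma)-{\rm Bal}(\mu_n,\Sigma)$, which has the advantage of giving the balayage representation of $\mu_Q$ \emph{unconditionally}, i.e.\ without first assuming $\mu_Q=(1+q)\sigma_{\vert\Sigma}$ as your second paragraph does). But the two points you explicitly defer are precisely where the paper's proof does its real work, so as it stands the proposal has a genuine gap rather than a complete alternative route. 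Concretely: (1) the absence of a singular layer $\rho$ on $\partial\Sigma$ is not a routine afterthought; the paper obtains the inequality $\mu_Q\le(1+q)\sigma_{\vert\Sigma}$ by comparing $\mu_Q$ with the \emph{signed equilibrium} $\eta_\epsilon$ on the set $\Sigma_\epsilon$ computed in \cite{BDSW}, transferring the weighted-potential inequality \eqref{SphVarIneq} to the plane by the spherical-to-complex formula and invoking the de la Vall\'ee Poussin theorem \cite[Theorem IV.4.5]{SaffTotik}, and even then it still needs $\sigma(\partial C_j)=0$, which is only established through the Section \ref{Sect5} analysis (algebraic boundary of the quadrature domain). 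None of this machinery, or a substitute for it, appears in your sketch; you only name the difficulty.

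(2) Your proposed repair via the domination principle does not obviously close either gap. The classical logarithmic domination principle requires the potential inequality on the \emph{support} of the finite-energy measure being dominated; in your converse step the measure in question is $(1+q)\sigma_{\vert\widetilde C_k}$, whose support is $\overline{\widetilde C_k}$, while the equality $U^{\widetilde\mu}+Q=F$ is known only on $\widetilde\Sigma$, i.e.\ on $\partial\widetilde C_k$ --- so the hypothesis of the principle is not available where you need it. Your own reduction makes the difficulty visible: nonnegativity of the Green potential $\int g_{\widetilde C_k}(\cdot,y)\,d\gamma_k(y)$ with $\gamma_k=\mu_{n,k}-(1+q)\sigma_{\vert\widetilde C_k}$ is the assertion that $\widetilde C_k$ is a quadrature domain for \emph{subharmonic} functions (a partial-balayage statement), which is strictly stronger than the balayage identity \eqref{BalCond}/\eqref{mu_Q^tcond} and cannot be deduced from it by a maximum-principle argument alone; chaining the defining inequalities of balayage gives the inequality in the wrong direction inside $\widetilde C_k$. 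So both the vanishing of $\rho$ and the exterior Frostman inequality need an actual argument of the type the paper supplies (signed equilibrium plus de la Vall\'ee Poussin, plus the measure-zero boundary from Section \ref{Sect5}), and until one is given, the derivation of \eqref{BalCond}, the formula $\mu_Q=(1+q)\sigma_{\vert\Sigma}$, and the uniqueness claim all remain conditional.
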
 

{\bf Remark:} Note that as discussed above, for logarithmic interaction potentials we have ${\rm Bal}((1+q){\sigma_2}_{\vert_{C_j} } , C_j^c )={\rm Bal}((1+q){\sigma_2}_{\vert_{C_j} } , \partial C_j )$. 

 \begin{proof}  We first describe the conversion of logarithmic potentials on the sphere and the complex plane under stereographic projection. Let $a\in \mathbb{S}^2$ and let $z,w \in \mathbb{C}$ be the stereographic images of $x,y\in \mathbb{S}^2$ respectively under inversion centered at $a$ with radius $\sqrt{2}$, i.e. $|x-a| \cdot |z-a|=|y-a|\cdot |w-a|=2$. Let the measure in the complex plane $\mu^*$ be the image of a measure $\mu$ supported on $\mathbb{S}^2$. It is clear that $\mu^* (\mathbb{C})=\mu(\mathbb{S}^2)$. Utilizing the distance conversion formula
 \[ |x-y|=\frac{2|z-w|}{|z-a||w-a|}, \] 
 we derive the following {\em spherical-to-complex potentials formula}
 \begin{equation} \label{Sphere-to-Complex} U^\mu (x)=U^{\mu^*}(z)+\|\mu^*\|\log\frac{|z-a|}{2}-U^{\mu^*}(a).\end{equation}
 
 Next, we shall find a balayage representation of the equilibrium measure $\mu_Q$. Denote the signed measure
 \[\eta:={\rm Bal}((1+q)\sigma,\Sigma)-{\rm Bal}(\mu_n,\Sigma).  \]
 Clearly, $\eta$ is supported on $\Sigma$ and its weighted potential satisfies
 \[ U^\eta (x)+Q(x)=(1+q)U^\sigma (x)=(1+q)V^\sigma, \quad x\in \Sigma. \]
 On the other hand, from the spherical counterpart of Theorem 1 
 \[ U^{\mu_Q}(x)+Q(x)=F_Q,\quad x\in {\rm supp}(\mu_Q)=\Sigma.\]
 This implies that $U^{\mu_Q-\eta}(x)=const$ on $\Sigma$, and hence $V^{\mu_Q-\eta}=0$ as the total mass of the signed measure $\mu_Q-\eta$ is zero. Since both measures, $\mu_Q$ and $|\eta|=\eta^++\eta^-$ have finite logarithmic energies, using  \cite[Theorem 4.1]{Si} one concludes that $\mu_Q-\eta\equiv 0$ and the balayage representation
 \begin{equation}\label{BalRepr}
 \mu_Q={\rm Bal}((1+q)\sigma,\Sigma)-{\rm Bal}(\mu_n,\Sigma)
 \end{equation}
 holds. Observe that 
 \[ {\rm Bal}((1+q)\sigma,\Sigma)=(1+q) \sigma_{\vert \Sigma}+\sum_{j=1}^m {\rm Bal}((1+q)\sigma_{\vert C_j},\Sigma)\]
 and 
 \[ {\rm Bal}(\mu_n,\Sigma)= \sum_{j=1}^m {\rm Bal}(\mu_{n,j},\Sigma).\]
 Utilizing the fact that for every $j=1,\dots,m$
 \[{\rm Bal}((1+q)\sigma_{\vert C_j},\Sigma) ={\rm Bal}((1+q)\sigma_{\vert C_j},\partial C_j), \ {\rm Bal}(\mu_{n,j},\Sigma)={\rm Bal}(\mu_{n,j},\partial C_j)\] 
we can further expand \eqref{BalRepr} as
 \[
 \mu_Q=(1+q) \sigma_{\vert \Sigma}+\sum_{j=1}^m \left[ {\rm Bal}((1+q)\sigma_{\vert C_j},\partial C_j)-{\rm Bal}(\mu_{n,j},\partial C_j)\right],
 \]
 which implies that
 \begin{equation} \label{nu_j}
 \nu_j := (1+q)\sigma_{\vert \partial C_j} +{\rm Bal}((1+q)\sigma_{\vert C_j},\partial C_j)-{\rm Bal}(\mu_{n,j},\partial C_j) \geq 0,\ j=1,\dots,m .\end{equation}
 
Next, let us consider $\epsilon>0$ small enough, so that the set $\Sigma_\epsilon$ obtained by removing from $\mathbb{S}^2$ open disjoint spherical caps of radius $\epsilon$ with centers $a_j$ includes in its interior $\Sigma$. This is possible as ${\rm supp}(\mu_Q)$ is contained in a set $\{ x: Q(x)\leq C\}$ for some $C$ large enough. Consider the {\em signed equilibrium} on $\Sigma_\epsilon$ associated with $Q$, namely the unique signed measure $\eta_\epsilon$, such that  $\eta_\epsilon(\mathbb{S}^2)=1$ and
\[ U^{\eta_\epsilon}(x)+Q(x)=F_\epsilon, \ x\in \Sigma_\epsilon  \]
for some constant $F_\epsilon$ (see \cite{BDS} for details).
The signed equilibrium was found in \cite{BDSW} as
\[ \eta_\epsilon = (1+q) \sigma_{\vert \Sigma_\epsilon}+ {\rm Bal}((1+q)\sigma_{\vert \Sigma_\epsilon^c},\Sigma_\epsilon)-{\rm Bal}(\mu_n,\Sigma_\epsilon) .\]
Utilizing \eqref{SphVarIneq} for $t=1$ we derive
\[U^{\mu_Q+{\rm Bal}(\mu_n,\Sigma_\epsilon)}(x)\geq U^{(1+q) \sigma_{\vert \Sigma_\epsilon}+ {\rm Bal}((1+q)\sigma_{\vert \Sigma_\epsilon^c},\Sigma_\epsilon)}(x) +F_Q-F_\epsilon\ {\rm on} \ \Sigma_\epsilon \]
with equality on $\Sigma$. Reducing the inequality to potentials in the complex plane using \eqref{Sphere-to-Complex} for a stereographical projection about properly chosen $a$ and eliminating the $\log |z-a|$ term because of the normalization $\mu_Q(\mathbb{S}^2)=\eta_\epsilon(\mathbb{S}^2)=1$, we can apply the de la Vall\'{e}e Poussin theorem \cite[Theorem IV.4.5]{SaffTotik} for the image-measures in the complex plane and transfer the inequalities to the pre-images on the sphere and conclude
\[ \left(\mu_Q+{\rm Bal}(\mu_n,\Sigma_\epsilon)\right)_{\vert_\Sigma} \leq \left( (1+q) \sigma_{\vert \Sigma_\epsilon}+ {\rm Bal}((1+q)\sigma_{\vert \Sigma_\epsilon^c},\Sigma_\epsilon)\right)_{\vert_\Sigma}.\]
As the balayage measures are supported on the boundary of $\Sigma_\epsilon$, this is equivalent to $\mu_Q \leq (1+q) \sigma_{\vert \Sigma}$, which implies $\nu_j \leq (1+q)\sigma_{\vert \partial C_j}$. In the Remark at the end of Section 5 we shall see that $\sigma_{\vert \partial C_j}\equiv 0$ and hence 
\eqref{BalCond} follows. 

To complete the theorem, we derive \eqref{mu_Q^t} and \eqref{mu_Q^tcond} similarly, using \eqref{SphVarIneq} for $t_j$ instead.
 \end{proof}
 \vskip 2mm
 
 {\bf Remark:} We note that the material in this section can be generalized for Riesz $d-2$-potential interactions on $\mathbb{S}^d$. A careful analysis of the mass loss occurring after Riesz $(d-2)$-balayage is essential and will be pursued in a subsequent work.
 
\section{Equilibrium Support via Mergelyan}

\label{Sect4}

We now focus on describing the projection of the equilibrium support, $\Sigma^*$, which recall is described by (\ref{Frostman}). The hands-on approach of this section will require $a$ $priori$ knowledge of regularity of $\Sigma^*$, but we will see that this is not unwarranted in view of the next section of the article.

As above, let $n$ point charges of intensities $q_1, \cdots, q_n$ be placed at points $a_1, \cdots, a_n$ on $\mathbb{S}^2$. We assume that the equilibrium support $\Sigma$ is the complement in $\mathbb{S}^2$ of a $C^\infty$ smooth relatively open set $\Sigma^c \subset \mathbb{S}^2$. Assume further that $\Sigma^c$ has finitely many components, each of which is finitely-connected. 

Let the connected components of $\Sigma^c$ be named $C_1, \cdots, C_m$, and let $\Sigma^*$, $(\Sigma^c)^*$, $C_j^*$ denote stereographic projections to the plane. The projections of the $a_j$ will be called $z_j$. We assume as well that $\Sigma$ contains an interior point, and that the stereographic projection is taken from such a point.

The following theorem states that the equilibrium support $\Sigma$ is the stereographic preimage of the complement of a union of planar quadrature domains, the sum of whose orders is equal to the number of point charges.

\begin{thm} With everything set up as just described, each component $C_j^*$ of $(\Sigma^c)^*, j=1,\cdots m$, is a bounded quadrature domain in the plane. The sum of the orders of all the quadrature domains $C_j^*$ is $n$, the total number of point charges comprising the external field.
\end{thm}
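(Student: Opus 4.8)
The plan is to exploit Frostman's condition \eqref{Frostman} directly on each component $C_j^*$ and extract from it a Schwarz function with finitely many poles, so that the previously quoted theorem on Schwarz functions identifies $C_j^*$ as a quadrature domain. First I would rewrite \eqref{Frostman} by separating the logarithmic potential of the (known, by Theorem \ref{bal}) equilibrium density $(1+q)\sigma_{|\Sigma}$, pulled back to the plane, from the potential of the planar Lebesgue measure on all of $\mathbb{C}$; concretely, one uses that $\frac{1}{\pi}\int_{\mathbb{C}} \frac{1}{(1+|w|^2)^2}\ln\frac{1}{|w-z|}\,dA_w$ is an explicit smooth function (essentially $-\ln\sqrt{1+|z|^2}$ up to a constant), so that \eqref{Frostman} becomes an identity, valid on $\Sigma^*$, of the form
\[
-\frac{q+1}{\pi}\int_{(\Sigma^c)^*}\frac{1}{(1+|w|^2)^2}\ln\frac{1}{|w-z|}\,dA_w+\sum_{i=1}^n q_i\ln\frac{1}{|z-z_i|}=\text{const}.
\]
Because the components of $(\Sigma^c)^*$ are separated (Section 3), restricting attention near a single $C_j^*$ the cross terms from the other components are harmonic there, and one is left, on $\partial C_j^*$, with an equation relating the Newtonian-type potential of $C_j^*$ against its own density to a sum of finitely many logarithms $\sum_{z_i\in C_j^*} q_i\ln|z-z_i|$ plus a function harmonic across $\partial C_j^*$.

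Next I would differentiate in $z$: applying $\partial_z = \tfrac12(\partial_x - i\partial_y)$ to the identity turns $\ln|z-z_i|$ into $\tfrac12(z-z_i)^{-1}$ and turns the area-potential of $C_j^*$ into a Cauchy transform $\frac{1}{\pi}\int_{C_j^*}\frac{\rho(w)}{z-w}\,dA_w$, whose $\bar\partial$-derivative recovers the density $\rho$ (here $\rho(w)=(1+q)/(\pi(1+|w|^2)^2)$ on $C_j^*$). Taking $\bar\partial$ of the differentiated identity on $C_j^*$ then forces a relation in which the unknown is the boundary behavior of an analytic function; the upshot, following the standard quadrature-domain calculus, is that $\bar w$ restricted to $\partial C_j^*$ agrees with a function meromorphic on $C_j^*$ whose only poles are simple poles at the $z_i\in C_j^*$ (one pole per point charge, with residue proportional to $q_i$), once the smooth harmonic remainder — which by the assumed $C^\infty$ regularity extends analytically across $\partial C_j^*$ — is absorbed. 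This is precisely the hypothesis of the quoted Schwarz-function theorem, so $C_j^*$ is a quadrature domain with exactly $\#\{i:z_i\in C_j^*\}$ nodes; summing over $j$ gives total order $n$. Here I would invoke Mergelyan's theorem to justify that the identity between boundary values of analytic functions, known only on $\partial C_j^*$ where it is continuous, propagates to an identity of the analytic continuations inside — this is the step the section title advertises, and it is where the assumed smoothness of $\Sigma^*$ is genuinely used.

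The main obstacle, I expect, is the bookkeeping that converts the real-variable Frostman identity into a clean statement about one analytic function extending meromorphically: one must check that the contribution of the factor $(1+|w|^2)^{-2}$ (i.e. the non-constant conformal density coming from stereographic projection) does not spoil meromorphy of the Schwarz function, and that the "harmonic remainder" from the other components $C_k^*$, $k\neq j$, and from the explicit $\ln\sqrt{1+|z|^2}$ term, really is real-analytic up to and across $\partial C_j^*$ so that its $\partial_z$-derivative is analytic there and can be moved to the meromorphic side. The separation result from Section 3 (each point charge influences only its own component) is what guarantees the pole count is exactly $n$ rather than something larger, and boundedness of each $C_j^*$ follows because $\Sigma$ has a non-empty interior containing the projection point, so $\infty\notin (\Sigma^c)^*$.
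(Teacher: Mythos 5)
Your overall route---subtracting the (constant) potential of the uniform spherical measure so that the area integral in \eqref{Frostman} moves to the complement $(\Sigma^*)^c$, differentiating in $z$ to get a Cauchy-transform identity, and then extracting a meromorphic Schwarz function---is essentially the paper's Section \ref{Sect5} alternate approach rather than the Mergelyan argument of Section \ref{Sect4}, and that skeleton is viable. But two steps are genuinely gapped. First, the differentiated identity \eqref{wqd} holds only for $z\in\Sigma^*$, so you cannot simply ``take $\bar{\partial}$ of the differentiated identity on $C_j^*$'': inside $C_j^*$ the identity is not known. What produces the analytic object inside $C_j^*$ is the Aharanov--Shapiro mechanism: the Cauchy transform $I(z)$ of $u=\chi_{(\Sigma^*)^c}/(1+|w|^2)^2$ is continuous on all of $\mathbb{C}$ and satisfies $\bar{\partial}I=-\pi u$ distributionally, so $g(z)=I(z)+\pi\bar{z}/(1+|z|^2)$ is holomorphic in $(\Sigma^*)^c$ by Weyl's lemma, and the boundary identity comes from continuity of both sides of \eqref{wqd}, not from Mergelyan. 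Your proposed use of Mergelyan (to ``propagate'' a boundary identity to analytic continuations) is not what that theorem does, and is not how the paper uses it either: in Section \ref{Sect4} Mergelyan extends a boundary quadrature identity from rational functions to $A^\infty(C_j^*)$, after which the Hardy-space orthogonal decomposition supplies the holomorphic function $H$.

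Second, and more seriously, your count of the order rests on the claim that $\bar{w}|_{\partial C_j^*}$ extends meromorphically into $C_j^*$ with simple poles exactly at the $z_i\in C_j^*$ and residues proportional to $q_i$. That is false. What has poles at the $z_i$ is the meromorphic extension of $\bar{w}/(1+|w|^2)$, equivalently of $1/(w+1/\bar{w})$ (this is the ``spherical'' quadrature data), whereas the Schwarz function $S(w)$, obtained by solving for $\bar{w}$, has its poles at different points---the Lebesgue nodes---as the examples in Section \ref{Sect6} show explicitly (the spherical nodes and the Lebesgue nodes of the Neumann oval do not coincide). Hence the assertion that the order of $C_j^*$ equals ${\rm card}(I_{C_j^*})$ does not follow from what you wrote; the paper needs the full argument-principle computation (comparing winding numbers of $w+1/S(w)$, $S(w)$, and $w$ along the boundary cycle to conclude $p(S)={\rm card}(I_{C_j^*})$), and that step is entirely missing from your proposal. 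Once the Schwarz-function extraction is done correctly and the pole count is supplied, your remarks on boundedness of the $C_j^*$ and the summation over $j$ giving total order $n$ are fine.
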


\begin{proof}
Our strategy is to rewrite (\ref{Frostman}) in order to exploit Green's Theorem and get integrals on the boundary. Then, we will extract an integration formula for rational functions, which by means of Mergelyan's Theorem will be extended to holomorphic functions. Finally, using the orthogonal decomposition of the Hardy Space on smooth bounded domains, we will demonstrate that the $C_j^*$ are quadrature domains by proving their boundaries have meromorphic Schwarz functions.
 
 From (\ref{Frostman}), differentiate in $z$ on both sides, use $dw \wedge d\bar{w}=-2 i \cdot dA_w,$ and rearrange to obtain

\begin{equation}
\label{F2}
\frac{-1}{2 \pi i} \int_{\Sigma^*} \frac{1}{(1+|w|^2)^2} \frac{1}{w-z} dw \wedge d\bar{w} -\frac{1}{q+1} \sum_{i=1}^n \frac{1}{z-z_i} + \frac{\bar{z}}{1+z \bar{z}} \equiv 0,
\end{equation}

valid for all $z\in \Sigma^*$.

In light of the previous Theorem \ref{bal}, we can examine just one component $C_j^*$ at a time. So let $I_{C_j^*}$ denote the set of all indices such that $z_i \in C_j^*$ exactly when $i \in I_{C_j^*}$. Then consider the equilibrium problem \eqref{t_eq} on the charged sphere with total charge $t=1+q-\sum_{i\in C_j^*}q_i$, and with external field exerted by charges $q_i$ at the points $z_i$, for $i \in I_{C_j^*}$.

For convenience, let the various components of the support of the resulting equilibrium measure be called $S_0, S_1, S_2, \cdots, S_K,$ where $S_0$ is the unbounded component. (Recall that we have projected from an interior point of $\Sigma$, so in the plane all points near $\infty$ belong to $\Sigma^*.$) We will also use $\gamma_j$ to denote the outer boundary curve of $C_j^*.$ We begin by examining what happens for $z$ in the unbounded component $S_0$.

CASE 1: $z \in S_0$.

 Noting that
 \[ \frac{1}{(1+|w|^2)^2}= \frac{\partial}{\partial \bar{w}} \frac{\bar{w}}{1+w \bar{w}},\]
  we modify (\ref{F2}) as follows, recalling that our external field is now considered only as comprising the charges at the $z_i \in C_j^*$:

\[-\frac{1}{2\pi i} \sum_{k=0}^K \int_{S_k}\frac{\partial}{\partial\bar{w}} \big{(} \frac{\bar{w}}{1+w \bar{w}}\big{)} \frac{1}{w-z}dw \wedge d\bar{w} + \frac{\bar{z}}{1+z\bar{z}} = \frac{1}{q+1} \sum_{i\in I_{C_j^*}} \frac{q_i}{z-z_i}.    \]

Next, we use Stokes's Theorem and the $C^\infty$ Cauchy Integral Formula (see e.g. \cite{BellBook}) to evaluate the area integrals on the left side.

Let $R$ be arbitrarily large, $|z|<R$, and let $D_R$ be the disc centered at the origin of radius $R$. For the integral over the unbounded component, break the integral into two pieces: one inside and one outside $D_R$, and use the $C^\infty$ Cauchy Formula on the inside portion
 \[-\frac{1}{2\pi i}\int_{S_0}\frac{\partial}{\partial\bar{w}}\big{(} \frac{\bar{w}}{1+w \bar{w}}\big{)} \frac{1}{w-z}dw\wedge d\bar{w}=\]
 
 \[-\frac{1}{2\pi i }\int_{S_0 \cap D_R}\frac{\partial}{\partial\bar{w}}\big{(} \frac{\bar{w}}{1+w \bar{w}}\big{)} \frac{1}{w-z}dw\wedge d\bar{w} - \frac{1}{2 \pi i} \int_{D_R^c}\frac{\partial}{\partial\bar{w}}\big{(} \frac{\bar{w}}{1+w \bar{w}}\big{)} \frac{1}{w-z}dw\wedge d\bar{w}= \]
 
 \[-\frac{\bar{z}}{1+z\bar{z}} + \frac{1}{2 \pi i}\int_{\partial D_R} \frac{1}{w+\frac{1}{\bar{w}}} \frac{1}{w-z} dw - \frac{1}{2 \pi i} \int_{\gamma_j} \frac{1}{w+\frac{1}{\bar{w}}} \frac{1}{w-z}dw-\]
 
 \[ \frac{1}{2 \pi i} \int_{D_R^c} \frac{1}{(1+|w|^2)^2} \frac{1}{w-z}dw\wedge d\bar{w}.\]
 
  In this equality, let $R \to \infty$. Then the area integral over $D_R^c$ is on the order 
  \[\frac{1}{R^4}\cdot \frac{1}{R} \cdot R^2\] 
  and so tends to $0$. The boundary integral over $\partial D_R$ goes on the order 
  \[ \frac{R}{R^2} \cdot \frac{1}{R} \cdot R ,\]
   and so also tends to $0$. We conclude that
   
    \[-\frac{1}{2\pi i}\int_{S_0}\frac{\partial}{\partial\bar{w}}\big{(} \frac{\bar{w}}{1+w \bar{w}}\big{)} \frac{1}{w-z}dw\wedge d\bar{w}= -\frac{\bar{z}}{1+z\bar{z}}  - \frac{1}{2 \pi i} \int_{\gamma_j} \frac{1}{w+\frac{1}{\bar{w}}} \frac{1}{w-z}dw.\] 
    This takes care of the unbounded portion of the integration.

Next, we analyze the integration over the bounded $S_k$, $k>0$. Here we can use Stokes's Theorem straight away, without recourse to the Cauchy Formula. This is because, since $z$ is outside $S_k$, the integrand 

\[ \frac{1}{(1+|w|^2)^2} \cdot \frac{1}{w-z}\]

 is smooth up to the boundary, and $1/(w-z)$ is holomorphic. Notice then that
 
  \[-\frac{1}{(1+|w|^2)^2} \frac{1}{w-z}dw \wedge d\bar{w} = d(\frac{1}{w+\frac{1}{\bar{w}}} \frac{1}{w-z} dw).\]
  
   By Stokes's Theorem, we conclude that
   
    \[-\frac{1}{2\pi i} \int_{S_k} \frac{1}{(1+|w|^2)^2} \frac{1}{w-z} dw \wedge d\bar{w}= \frac{1}{2 \pi i}\int_{\partial S_k} \frac{1}{w+\frac{1}{\bar{w}}} \frac{1}{w-z} dw.\]

The above computations have found equivalent boundary versions of the various area integrals: take them all and substitute into (\ref{F2}). The $\frac{\bar{z}}{1+z\bar{z}}$ term cancels. Observe the boundary integrals are occurring over the boundaries of all the bounded components $S_1, \cdots, S_K$, with standard orientation. So we can write them as occurring over the boundary of the complement $C_j^*$ in reverse. By introducing a factor of $-1$ we get 

\[ \frac{1}{2 \pi i} \int_{\partial C_j^*} \frac{1}{w+\frac{1}{\bar{w}}} \frac{1}{w-z}dw= \frac{1}{q+1}\sum_{i \in I_{C_j^*}} \frac{q_i}{z_i-z}. \]

We'll keep this formula in mind and turn attention to the case when $z$ is located in a bounded component.

CASE 2: $z$ is in a bounded component of $\Omega$.

In case $z \in S_k,$ $k>0$, we can manipulate (\ref{F2}) in much the same way as in CASE 1.  Use the $C^\infty$ Cauchy Formula on the component $S_k$, and on all other bounded components use Stokes's Theorem. On the unbounded component, first break the area integral into portions inside and outside a large disc $D_R$. On the inner part use Stokes's Theorem. Then let $R \to \infty$, and find that the area and boundary integrals involving $D_R$ vanish, leaving only an integration over the outer boundary curve $\gamma_j$.  And now again all the area integration has been moved to the boundary. The orientations align themselves in such a way that after substituting into (\ref{F2}), the same final formula occurs as in CASE 1. 

So we conclude that for any $z \in \Sigma^*$, the following formula is valid:

\begin{equation}
\label{F3}
 \frac{1}{2 \pi i} \int_{\partial C_j^*} \frac{1}{w+\frac{1}{\bar{w}}} \frac{1}{w-z}dw= \frac{1}{q+1}\sum_{i \in I_{C_j^*}} \frac{q_i}{z_i-z}.
 \end{equation}

We are now ready to see how this formula leads to a quadrature rule for rational functions.  Differentiate our new equation (\ref{F3}) any number of times in $z$, and for any positive integer $r$, 

\[\frac{1}{2 \pi i}  \int_{\partial C_j^*} \frac{1}{w+\frac{1}{\bar{w}}} \frac{1}{(w-z)^r}dw= \frac{1}{q+1}\sum_{i \in I_{C_j^*}} \frac{q_i}{(z_i-z)^r}.\]

By linearity and the Fundamental Theorem of Algebra, this means that for any rational function with poles only in $\Sigma^*$, 

\[\frac{1}{2 \pi i} \int_{\partial C_j^*} \frac{1}{w+\frac{1}{\bar{w}}} R(w)dw= \frac{1}{q+1}\sum_{i \in I_{C_j^*}} q_i R(z_i).\]

Now we use Mergelyan's Theorem.  Let $h \in A^\infty(C_j^*)$; that is, $h$ is analytic and smooth up to the boundary. We can, using only rational functions with poles in $\Sigma^*$, uniformly approximate the function $h$. By uniform convergence, this yields:

\[\frac{1}{2\pi i} \int_{\partial C_j^*} \frac{1}{w+\frac{1}{\bar{w}}} h(w)dw=\frac{1}{q+1} \sum_{i \in I_{C_j^*}} q_ih(z_i). \]

After this, rewrite the right hand side as a sum of Cauchy integrals, and subtract them to the left side. The result is that, for any $h \in A^\infty(C_{j}^*)$, 

\[\frac{1}{2\pi i} \int_{\partial C_{j}^*}\big{[}\frac{1}{w+\frac{1}{\bar{w}}}-\sum_{i \in I_{C_j^*}} \frac{q_i}{q+1} \frac{1}{w-z_i}\big{]}h(w)dw=0. \]
But $A^\infty$ is dense in the Hardy Space of $C_j^*$. That means the bracketed part of the integrand is orthogonal to the Hardy Space, and by the orthogonal decomposition of the Hardy Space (e.g. \cite{BellBook}), this means that there exists a function $H$, holomorphic and smooth up to the boundary of $C_{j}^*$, such that for all $w$ on the boundary of $C_{j}^*$,

 \[ \frac{1}{w+\frac{1}{\bar{w}}}-\sum_{i \in I_{C_j^*}} \frac{q_i}{q+1} \frac{1}{w-z_i}=H(w).\]

We are now in essence finished, because we can simply solve for $\bar{w}$ in this equation to see that $\bar{w}$ has the boundary values of a meromorphic function. This means that $C_{j}^*$ has a meromorphic Schwarz function, and consequently is a quadrature domain. 

Counting poles with the argument principle, we will see that the order of $C_j^*$ as a quadrature domain is the cardinality of $I_{C_j^*}$. Let us use the argument principle on the boundary cycle of $C_j^*$. Let $\nu(\cdot)$ be the winding number of a function around the boundary cycle, and let $\zeta(\cdot)$ be the number of zeroes of a function occurring in $C_{j}^*$, and let $p(\cdot)$ denote the number of poles occurring in $C_{j}^*.$ Then the argument principle ensures that for meromorphic functions smooth up to the boundary without roots or poles on the boundary, $\nu(\cdot)=\zeta(\cdot)-p(\cdot).$

Consider the function 
\[\frac{1}{w+S(w)^{-1}},\]
where $S(w)$ is the Schwarz function of the boundary of $C_{j}^*$.  We have just seen that along the boundary of $C_{j}^*$, \[\frac{1}{w+\frac{1}{S(w)}}=H(w)-\sum_{i\in I_{C_j^*}}\frac{q_i}{q+1} \frac{1}{w-z_i}.\]
 The number of poles on the right is exactly $card(I_{C_j^*})$, and so this is also 
 \[p(\frac{1}{w+\frac{1}{S(w)}}).\]
 These poles are by inspection the roots of $w+S(w)^{-1}$. So \[\zeta(w+\frac{1}{S(w)})=card(I_{C_j^*}).\]
   We can also determine that the winding number is \[\nu(w+S(w)^{-1})=\nu(w+\frac{1}{\bar{w}})=\nu(\frac{1+|w|^2}{\bar{w}}).\]
    The numerator is real-valued and makes no contribution to the winding number, so this further simplifies to \[\nu(\frac{1}{\bar{w}})=\nu(\frac{w}{|w|^2})=\nu(w)\]
     (again, the real valued $|w|^2$ has made no contribution). By the argument principle, we now have:

\[card(I_{C_j^*})-p(w+S(w)^{-1})=\nu(w). \] 

Notice now that $w+S(w)^{-1}$ has poles exactly at the roots of $S(w)$.  In other words, $p(w+\frac{1}{S(w)})=\zeta(S(w)).$ Substituting into our formula now yields: $card(I_{C_j^*})-\zeta(S(w))=\nu(w),$ and deploying the argument principle on $\zeta$ gives \[card(I_{C_j^*})-(\nu(S(w)))+p(S(w))=\nu(w).\]
 We are nearly finished. Looking at $S$, we see that \[\nu(S(w))=\nu(\bar{w}).\]
  Since $Arg(\bar{w})=-Arg(w)$ for all $w$, we see that $\nu(S(w))=-\nu(w).$ Plugging in one last time,

\[card(I_{C_j^*})+\nu(w)-p(S(w))=\nu(w), \]

whence

\[p(S(w))=card(I_{C_j^*}).\]

And now, since the poles of the Schwarz function count the nodes of evaluation in the quadrature identity, we conclude that $C_{j}^*$ is a quadrature domain of order $card(I_{C_j^*}).$
\end{proof}

We remark here that every $C_j^*$ must include one of the $z_i$, since otherwise $p(S(w))=0$, meaning that $\bar{w}$ is holomorphic on $C_j^*$, which is untenable. That means the $z_i$ are all contained in some of the $C_j^*$, and each $z_i$ can be a member of at most one $C_j^*$ since the $C_j^*$ are distinct connected components.  Thus $\sum_j card(I_j^*)=n$, and we are finished. The above argument principle approach did implicitly assume that $w=0$ is not on the boundary of any of the $C_j^*$, but this can be effected by rotating the sphere to slightly alter the north pole.

\section{An alternate approach}
\label{Sect5}

In this section, we gain the same description of the equilibrium support, from a point of view of ideas from Aharanov and Shapiro \cite{AS}, and Hedenmalm and Makarov \cite{HM}.  This approach will demonstrate an algebraic boundary for the equilibrium support, and give the quadrature property via a Schwarz function (recall that in the previous section we needed regularity of the boundary). But whereas in Section \ref{Sect4} we argued from Frostman's condition using the area integral on $\Sigma^*$, in this section we exploit the symmetry of $\mathbb{S}^2$ to pass the area integral in Frostman's condition directly to the complement $(\Sigma^c)^*$.

As before, let $\Sigma \subset \mathbb{S}^2$ be the support of the equilibrium support $\mu_Q$ in the presence of the point charges $q_j$ present at points $a_j \in \mathbb{S}^2.$  By an equatorial stereographic projection to the plane, and invoking Frostman's condition, we begin again at:

\begin{equation}
\label{Frostman2}
\begin{split}
\frac{1+q}{\pi} \int_{\Sigma^*} \frac{1}{(1+|w|^2)^2}\ln|w-z|^{-1}dA_w + (1+q)\ln\sqrt{1+|z|^2} \\ +\sum_{j=1}^n q_j \ln|z-z_j|^{-1}=const,
\end{split}
\end{equation}
where $q$ is the sum of all charge intensities, $\Sigma^*$ is the projection of $\Sigma$ to the plane, and $z_j$ is the projection of $a_j$ to the plane, and the equation holds for $z \in \Sigma^*$.

Importantly, Frostman's condition may be written in this way regardless of the boundary of $\Sigma^*$, as described in \cite{HM}, where the authors explain that the equilibrium measure's density is the Laplacian of the external field, throughout the support.  

On the other hand, by symmetry, the potential exerted by the uniform measure $(1+q)\sigma_2$ on $\mathbb{S}^2$ is constant over the whole of $\mathbb{S}^2$. After expressing this logarithmic potential in the plane via projection, we conclude that for all $z \in \mathbb{C}$, 

\[\frac{1+q}{\pi} \int_\mathbb{C} \frac{1}{(1+|w|^2)^2}\ln|w-z|^{-1}dA_w + (1+q)\ln\sqrt{1+|z|^2}=const, \]

a possibly different constant than the one above.

Upon splitting the integral over the whole plane into $\int_{\Sigma^*}+\int_{(\Sigma^*)^c}$, and combining with (\ref{Frostman}), this gives:

\[-\frac{1+q}{\pi} \int_{(\Sigma^*)^c} \frac{1}{(1+|w|^2)^2}\ln|w-z|^{-1}dA_w = \sum_{j=1}^n q_j \ln|z-z_j|+const, \]

where the constant has changed yet again.

At this point, differentiate each side in $z$, to obtain

\begin{equation}
\label{wqd}
\int_{(\Sigma^*)^c} \frac{1}{(1+|w|^2)^2} \cdot \frac{1}{w-z}dA_w=\sum_{j=1}^n \frac{\pi q_j}{1+q}\cdot \frac{1}{z_j-z}.
\end{equation}

This already suggests that $(\Sigma^*)^c$ is a quadrature domain with respect to weighted Lebesgue measure, but as we did before, we can further conclude that $(\Sigma^*)^c$ is a quadrature domain with respect to unweighted Lebesgue measure. 

In fact, we can employ the argument used by Aharanov and Shapiro when they connected the Schwarz function to quadrature identities \cite{AS}, suitably modified to our current situation. Following their approach, consider the function $u=\frac{1}{(1+|w|^2)^2}\chi$, where $\chi$ is the indicator function for $(\Sigma^*)^c.$ Letting $I(z)$ denote the integral on the left hand side of (\ref{wqd}), note that $I(z)$ is the Cauchy transform of $u$. 

By Lemma 2.1 of \cite{AS}, $I(z)$ is continuous on all of $\mathbb{C}$, and in the distributional sense we have:

\[\frac{\partial I}{\partial \bar{z}} = -\pi u(z).   \]

Still following the flow of \cite{AS}, let

\[g(z)=I(z)+\frac{\pi \bar{z}}{1+|z|^2}. \]

Applying $\frac{\partial}{\partial \bar{z}}$ in the distributional sense on $(\Sigma^*)^c$, we see that $g(z)$ is `weakly' holomorphic there. But by Weyl's Lemma, that means $g$ is legitimately analytic in $(\Sigma^*)^c$. Note also that $g$ is continuous up to the boundary of $(\Sigma^*)^c.$

But (\ref{wqd}) says also that $I(z)$ coincides on $\Sigma^*$ with a rational function which has exactly $n$ simple poles, all inside $(\Sigma^*)^c$.  Since the right side of (\ref{wqd}) and $I$ are continuous, we conclude that $(\ref{wqd})$ holds even on the boundary $\partial (\Sigma^*)^c$. So let $R(z)$ denote the rational function on the right hand side of (\ref{wqd}), and our conclusion is that the function

\[g(z)-R(z) \]
is meromorphic on $(\Sigma^*)^c$, is continuous to the boundary, and we have on the boundary $\partial (\Sigma^*)^c$, the equality:

\[\frac{\bar{z}}{1+|z|^2}=\frac{1}{\frac{1}{\bar{z}}+z}= g(z)-R(z).\]

Solving this equation for $\bar{z}$, we see that $ (\Sigma^*)^c$ has a meromorphic Schwarz function, and thus $(\Sigma^*)^c$ is a quadrature domain.  From here, we may count poles with the argument principle to conclude that it has order $n$, and we are guaranteed that the boundary is algebraic.

\textbf{Remark.} A consequence of this is that the Lebesgue measure of the boundary of the equilibrium support is $0$, as referenced in the proof of Theorem \ref{bal}. To reiterate, if we imagine letting the charge intensities grow, the components do not interact with each other until their boundaries touch, after which point they merge into a larger component.

\section{Examples}
\label{Sect6}

In this final section we present two examples of two-point configurations, one symmetric and the other asymmetric. In our examples, we place two point charges on the sphere and use the fact that simply connected quadrature domains are rational images of the unit disc. Via the Bergman kernel function, the corresponding quadrature nodes and coefficients in the quadrature identity can be determined. The Schwarz function can also be used to find the quadrature data with respect to Lebesgue and spherical measures.

For more than two point charges, we remark that multiply connected quadrature domains can arise, and in this case mapping conformally from the unit disc is no longer possible. For example, consider three or more point charges equidistributed on a circle, whose charge intensities are equal and growing. The resulting quadrature domain begins as disjoint discs, then coalesces into a single doubly-connected domain, and finally the hole closes leaving a simply connected domain. This case is studied for instance in \cite{CrowdyMultipolar} and \cite{GuSpecial}. More complicated configurations can be studied as well \cite{CM}.  For more on topology of quadrature domains, see \cite{LM}.  

\subsection{Two symmetric charges}

In the case of two symmetric charges whose caps of influence overlap, place the north pole of the sphere inside the equilibrium support in such a way that the point charges are symmetrically placed about the south pole, along the real line of the Riemann sphere. In this case, the region of charge exclusion projects to become a symmetric two-point quadrature domain in the plane, with quadrature nodes along the real axis symmetrically placed about the origin. (As mentioned in the introduction, this configuration was recently studied in \cite{CdRK} by other methods.)

Such a quadrature domain will be the conformal image of the unit disc under a map of the form $\varphi: z \to \frac{A}{C-z}+\frac{A}{-C-z}$, with $A,C$ real parameters.  Once the mapping parameters $A,C$ are given, one can calculate the nodes and coefficients defining the quadrature domain using the Bergman kernel.  In fact the quadrature nodes occur at $\pm \varphi(1/C)$, and the coefficients in the quadrature identity are both $\frac{\pi A}{C^2}\varphi'(1/C)$. 

The poles and residues of the Schwarz function $S(z)$ determine the quadrature data of a quadrature domain with respect to planar Lebesgue measure. With respect to spherical measure $\frac{1}{\pi(1+|z|^2)^2}dA$, the quadrature data are instead determined by the so-called `Spherical Schwarz Function', $\tilde{S}(z)=\frac{S(z)}{\pi(1+zS(z))}.$ By the use of the spherical Schwarz function, it is understood that spherical quadrature domains and planar quadrature domains are related via stereographic projection, although the quadrature data differ in each measure (cf. \cite{GT}). From the standpoint of fluid dynamics, this type of approach has been used, for instance in \cite{CC}.  

Now, the Schwarz function of our Neumann oval can be written as $\varphi(\frac{1}{\varphi^{-1}(z)})$. This comes from symmetry about the real line, together with the fact that the unit disc has Schwarz function $z^{-1}.$  Utilizing this formula in $\tilde{S}$, one can explicitly calculate quadrature data for $\varphi(\mathbb{D})$ with respect to the spherical measure in terms of the mapping parameters $A,C$.  We do not list these formulas here, but note that they are algebraic in $A,C.$

The result of all this is that we can choose the mapping parameters $A,C$, then compute exactly the planar and spherical quadrature data of the resulting Neumann oval. After stereographic pre-projection to the sphere, the spherical measure's quadrature data of course give the location and intensities of the point charges giving rise to the Neumann oval.

\begin{figure}[ht]
\centering
\includegraphics[scale=.5]{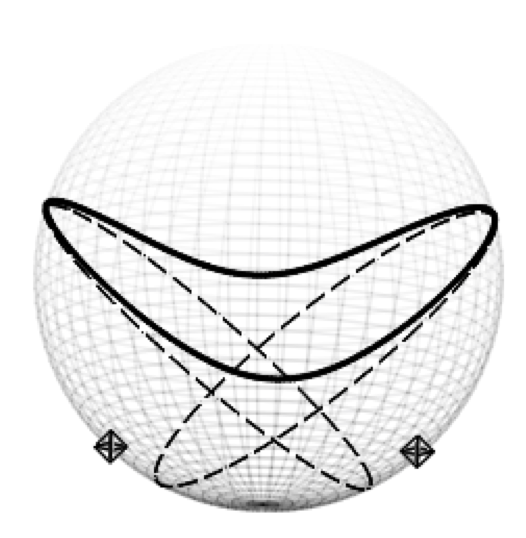}
\includegraphics[scale=.45]{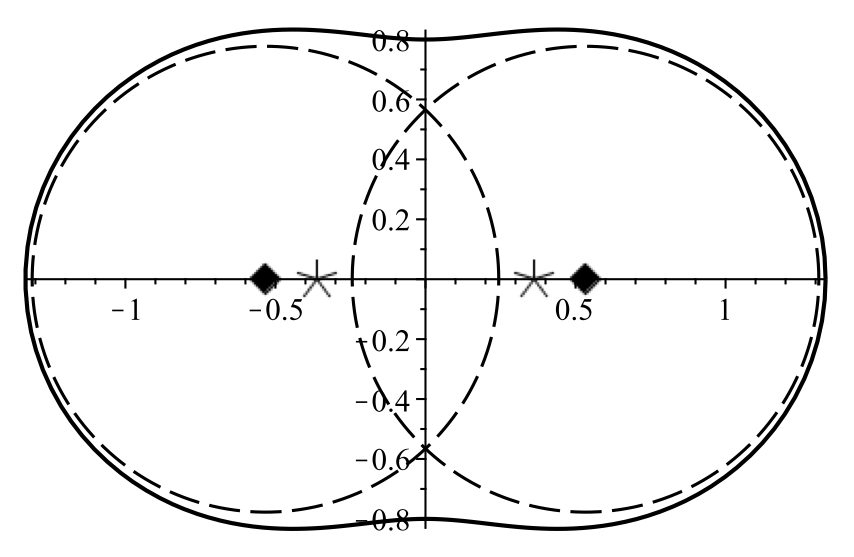}
\caption{Two charges - symmetric case}
\label{symetric}
\end{figure}

We implement this for a particular case, and plot the results in Maple.  Place point charges of intensity $q=\frac{41-3\sqrt{41}}{82}$ at the points $(\pm \frac{16}{25},0,-\frac{3\sqrt{41}}{25})$ of the unit sphere. Figure 1 shows the boundary of the resulting equilibrium support, together with the individual caps of influence.   After projection, the region of charge exclusion is a Neumann oval, being a quadrature domain with respect to both spherical and Lebesgue measure. With respect to Lebesgue measure, it has quadrature nodes at the points $\pm \frac{8}{15}$, with quadrature-identity coefficients $\frac{136 \pi}{225}$.  It is the image of the unit disc under the map $\varphi=\frac{2}{2-z}+\frac{2}{-2-z}.$ Figure $2$ presents the projection: the diamonds are the Lebesgue nodes, the asterisks are the spherical nodes, and the dotted circles are the discs of area $\frac{136 \pi}{225}$ about the Lebesgue nodes. We remark that the spherical nodes are closer to the origin of the plane than the Lebesgue nodes, since the spherical measure counts area further from the origin less.

\begin{figure}[ht]
	\centering
	\includegraphics[scale=.5]{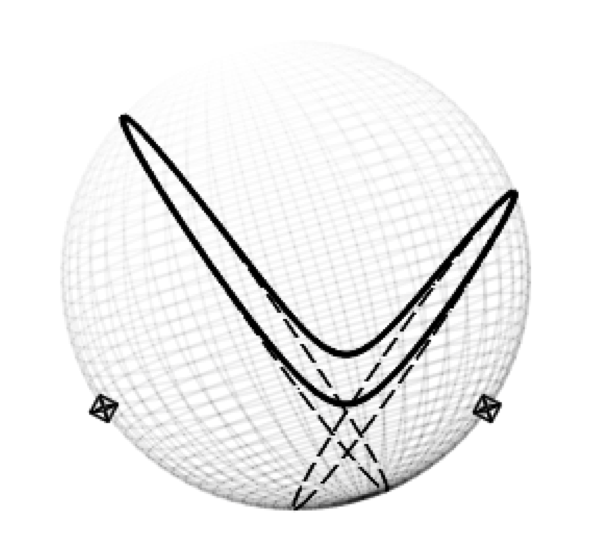}
	\includegraphics[scale=.45]{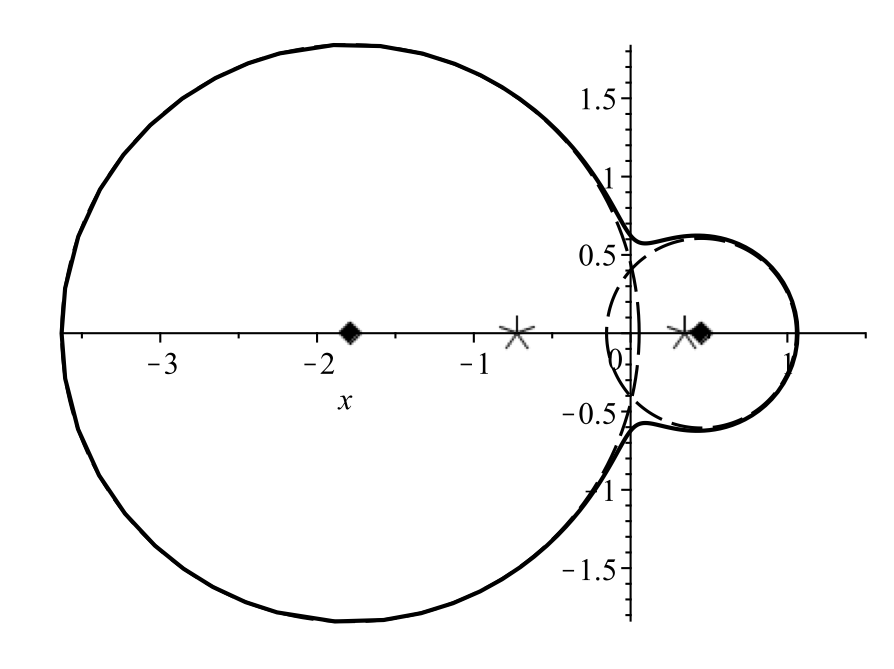}
	\caption{Two charges - asymmetric case}
	\label{asymetric}
\end{figure}

\subsection{Two asymmetric charges}
The case of two asymmetric charges on the sphere can be handled similarly. Consider the image of the conformal map from the unit disc given by $\varphi=\frac{1}{0.8-z}+\frac{1.7}{-1.2-z}.$  We compute $S$ and $\tilde{S}$, and find their poles and residues via Maple numerically (and we round to two decimal places). The conclusion is that we have a quadrature domain with respect to Lebesgue measure with nodes at approximately $-1.79$ and $0.45$, and the corresponding coefficients in the quadrature identity are approximately $10.66$ and $1.15$ respectively.  By computing the quadrature data with respect to spherical measure, we find the domain arises as the projection of the region of charge exclusion on the sphere, with charges placed at approximately $(-0.95,0,-0.31)$ and $(0.62,0,-0.79)$ with respective intensities approximately $q_1=0.12$, $q_2=0.07$. Figure $3$ displays the boundary of the spherical equilibrium support with the individual caps of influence, and Figure $4$ displays the stereographic projection, with the same conventions as in the previous example.

\noindent{\bf Acknowledgment.} The authors would like to thank the Institute for Computational and Experimental Research in Mathematics in Providence, RI, for their hospitality, where part of this work was initiated.  Additionally the authors would like to extend thanks to Edward Saff, Bjorn Gustafsson, and Darren Crowdy for valuable discussions and insights.

	\end{document}